\documentclass[letterpaper,10pt]{amsart}
\usepackage{amsmath,amsfonts,amssymb}

\newtheorem{theorem}{Theorem}[section]

\newtheorem{proposition}[theorem]{Proposition}

% Definitions

\newcommand{\C}{\ensuremath{{\mathbb C}}}

\newcommand{\F}{\ensuremath{{\mathbb F}}}

\newcommand{\p}{\ensuremath{{\mathfrak p}}}
\newcommand{\np}{\ensuremath{{\left|\mathfrak p\right|}}}

\newcommand{\OO}{\ensuremath{{\mathcal O}}}
\newcommand{\OOmon}{\ensuremath{{\mathcal O}_{mon} }}

%\newcommand{\draftcomm}[1]{}

 %a small fraction
\newcommand{\tha}{^{\mathrm{th}}}
\newcommand{\prs}[2]{\left(\frac{#1}{#2}\right)}
\newcommand{\sqf}[1]{#1_\flat}

\newcommand{\dsum}[2]{\displaystyle {\sum_{#1}^{#2}\:}}

\DeclareMathOperator{\cond}{cond}

\numberwithin{theorem}{section}
\numberwithin{equation}{section}

\begin{document}

\title{Sums of $L$-functions over the rational function field}
\author{Gautam Chinta and Joel B. Mohler}
 
\address{Department of Mathematics,
The City College of New York,
New York, NY 10031, USA
}
\email{chinta@sci.ccny.cuny.edu}

\address{Department of Mathematics,
Lehigh University,
Bethlehem, PA 18015, USA
}
\email{jbm5@lehigh.edu}

\thanks{The first named author is partially supported by a grant from
  the NSF}
\date {12 June 2007}

\begin{abstract}
Friedberg, Hoffstein and Lieman have  constructed two related 
multiple Dirichlet series from quadratic and higher-order 
$L$-functions and Gauss sums.  We compute these multiple Dirichlet series 
explicitly in the case of the rational function field.  This is done by 
utilizing the functional equation of the $L$-functions and the functional 
equation relating the two multiple Dirichlet series.  We also point out 
a very simple correspondence between these series and their $p$-parts.
\end{abstract}

\maketitle

%\tableofcontents

% Begin here

\section{Introduction}
Let $n$ be an integer greater than or equal to 2, $\F_q$ the finite
field with $q$ elements, and $K=\F_q(t)$ the rational function field.
The main result of this paper is the explicit computation of an
infinite sum of $L$-functions associated to $n^{th}$ order Hecke
characters of $K.$ The infinite sums we consider are examples of
double Dirichlet series in two complex variables, and can be written
as power series in $q^{-s}$ and $q^{-w}.$ In fact it will turn out
that the series we construct will be rational functions in $q^{-s}$
and $q^{-w}.$

These series are function field analogues of the series studied by
Friedberg, Hoffstein and Lieman in \cite{fhl:mds}.  In that paper, working
over a number field $F$ containing the $n^{th}$ roots of unity, the
  authors study a double Dirichlet series which is roughly of the form
  \begin{equation*}
    \sum_m L(s, \chi_m) (Nm)^{-w},
  \end{equation*}
where the sum is over integral ideals $m$ of $F$,  
$\chi_m$ is the $n^{th}$ order 
power residue symbol associated to $m$ and  $Nm$ denotes the absolute
norm.  The authors show that this double Dirichlet series has a
meromorphic continuation to all $(s,w)\in \C^2$ and satisfies  a group of
functional equations relating it to a second series constructed from
Gauss sums.  The main ingredients in the proof are the functional
equation of $L(s,\chi_m)$, properties of the Fourier coefficients of
the metaplectic Eisenstein series on the $n$-fold cover of $GL_2$ and
Bochner's tube theorem.

In the case $n=2$, these ideas were
applied by Fisher and Friedberg \cite{ff:mds} in the context of a general
function field to show the rationality of double Dirichlet series
constructed from quadratic $L$-functions.  The case $n=2$ is somewhat
easier because the Gauss sum arising in the functional equation of a
quadratic Hecke $L$-series is trivial, and the theory of metaplectic
Eisenstein series is not needed.

In this paper, we follow a more elementary method originally introduced in
\cite{cfh-survey} in the case $n=2$.  We exploit the fact that 
\begin{equation*}
  \sum_{\substack{d\in \F_q[t]\\ \deg d=k}} \prs{d}{m}
\end{equation*}
vanishes if $k$ is bigger than or equal to the degree of $m$, unless $m$ is a
perfect $n^{th}$ power.  Here $\prs{d}{m}=\chi_m(d)$ denotes the
$n^{th}$ power residue symbol for $m, d$ relatively prime.
If $m$ and $d$ are monic, then we have the reciprocity law
\begin{equation}
  \label{eq:1}
  \prs{m}{d}=\prs{d}{m},
\end{equation}
when $q$ is congruent to $1$ mod $2n,$
see e.g. Rosen \cite{rosen:function_fields}, Theorem 3.5.

We now describe our results more precisely.  We will define two
double Dirichlet series, explicitly compute them as rational functions in 
$q^{-s}, q^{-w}$ and show that they satisfy functional equations which 
relate them to one another.  We begin by 
defining  two multiplicative weighting factors $a(d,m)$ and $b(d,m)$ 
for pairs of monic polynomials, as in \cite{fhl:mds}.  For a monic
prime polynomial $\p,$ let 
\begin{equation}
  \label{eq:a}
a(\p^j,\p^k)=
\begin{cases}
\np^{(n-1)d/n}&\textrm{if }d=\textrm{min}(j,k)\textrm{ and }d\equiv 0 \textrm{ mod }n\\
0&\textrm{otherwise}
\end{cases}
\end{equation}
and 
\begin{equation}
  \label{eq:b}
b(\p^j,\p^k)=
\begin{cases}
1&\textrm{if }k=0\\
\np^{k/2-1}(\np-1) &\textrm{if } j\geq k, k\equiv 0\mbox{\ mod }n,k>0\\
-\np^{k/2-1} &\textrm{if }j=k-1, k\equiv 0\mbox{\ mod }n,k>0\\
\np^{(k-1)/2} &\textrm{if }j=k-1, k\not\equiv 0\mbox{\ mod }n,k>0\\
0&\textrm{otherwise}
\end{cases}
\end{equation}
Then define
\[ a(d,m)=\prod_{\substack{\p^j||d \\ \p^k||m}} a(\p^j,\p^k),\ \ 
b(d,m)=\prod_{\substack{\p^j||d \\ \p^k||m}} b(\p^j,\p^k). \]
Here, $|d|$ denotes the norm $q^{\deg d}.$

Let $\OO$ denote $\F_q[t]$, $\OOmon$ the set of monic
polynomials in $\F_q[t]$ and let $\zeta_\OO(s)$ be the zeta function
of the ring $\OO$:
$$\zeta_\OO(s)=(1-q^{1-s})^{-1}.$$ 
The first double Dirichlet series we consider is 
\begin{equation}
  \label{eq:2}
Z_1(s,w)=\sum_{\substack{d,m\in\OOmon}}
\frac{\chi_{m_0}(\hat{d})a(d,m)}{|m|^w|d|^s}\textrm{,}  
\end{equation}
where $m_0$ is the $n^{th}$ powerfree part of
$m$ and $\hat d$ is the part of $d$ relatively prime to $m_0.$
We show in the following section that this can be rewritten
in terms of $L$-functions:
\begin{equation}
  \label{eq:4}
  Z_1(s,w)=\sum_{\substack{m\in\OOmon}}
\frac{L(s,\chi_{m_0})}{|m|^w}P(s;m)
\end{equation}
where the $P(s;m)$ are finite Euler products defined in 
Proposition \ref{prop:Lhatchi}.

The second multiple Dirichlet series is built out of Gauss sums.  See
the following section for the precise definition of the Gauss sum
$g(r,\epsilon,\chi_m).$
Then
\begin{equation}
\label{z2def}
Z_2(s,w)
=\zeta_{\OO}(nw-\frac{n}{2}+1)\sum_{\substack{d,m\in\OOmon}}
\frac{g(1,\epsilon,\chi_{m_0})}{\sqrt{|\sqf{m}|}}\frac{\bar{\chi}_{m_0}(\hat{d}) b(d,m)}{|m|^w|d|^s}
\end{equation}
where $\sqf{m}$ is the squarefree part of the $n\tha$ powerfree part of $m$. 

We can now state our main theorems.  The first describes a set of functional equations relating 
$Z_1$ and $Z_2$.  Specifically, define 
\[
Z_1(s,w;\delta_i)=
\sum_{\substack{d,m\in \OOmon\\\deg m\equiv i\ (\text{mod\ } n)}}
\frac{\chi_{m_0}(\hat{d})a(d,m)}{|m|^w|d|^s}
\]
and 
\[
Z_2(s,w;\delta_i)=\zeta_{\OO}(nw-\frac{n}{2}+1)
\sum_{\substack{d,m\in \OOmon\\\deg m\equiv i\ (\text{mod\ } n)}}
\frac{g(1,\epsilon,\chi_{m_0})}{\sqrt{|\sqf{m}|}}\frac{\bar{\chi}_{m_0}(\hat{d}) b(d,m)}{|m|^w|d|^s}\mbox{.}
\]
\begin{theorem}\label{thm:fe}
We have the functional equation
\[
Z_1(s,w;\delta_i)=\left\{\begin{array}{ll}
q^{2s-1}\frac{1-q^{-s}}{1-q^{s-1}} Z_2(1-s,w+s-\frac{1}{2};\delta_0)
&  \text{for $i=0$ }\\
q^{2s-1}q^{1/2-s}\frac{\bar{\tau}(\epsilon^i)}{\sqrt{q}} 
Z_2(1-s,w+s-\frac{1}{2};\delta_i) & \text{for $0<i<n$. }
\end{array}\right.
\]
The finite field Gauss sum $\tau(\epsilon^i)$ is defined in the
following section. 
\end{theorem}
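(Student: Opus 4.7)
The plan is to apply the functional equation of each individual $L$-function appearing in the expansion (1.4) and then reorganize the result so as to recognize $Z_2(1-s,w+s-\tfrac12;\delta_i)$. Concretely, write
\[
Z_1(s,w;\delta_i)=\sum_{\substack{m\in\OOmon\\\deg m\equiv i\,(\mathrm{mod}\,n)}}\frac{L(s,\chi_{m_0})}{|m|^w}\,P(s;m)
\]
via (1.4). The key input is the functional equation of the Hecke $L$-function $L(s,\chi_{m_0})$ over $\F_q[t]$: for $m_0\ne 1$ it is a polynomial in $q^{-s}$ of degree $\deg\sqf{m}-1$, satisfying
\[
L(s,\chi_{m_0})=C(\chi_{m_0})\,q^{(1/2-s)(\deg\sqf{m}-1)}\,L(1-s,\bar\chi_{m_0}),
\]
where $C(\chi_{m_0})=\bigl(g(1,\epsilon,\chi_{m_0})/\sqrt{|\sqf{m}|}\bigr)\cdot\bigl(\bar\tau(\epsilon^{\deg m_0})/\sqrt{q}\bigr)$. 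Since $\deg m_0\equiv\deg m\equiv i\pmod n$, the infinite-place contribution is just $\bar\tau(\epsilon^i)/\sqrt{q}$.

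For $0<i<n$ the condition $\deg m\equiv i\pmod n$ forces $m_0\ne 1$, so this FE applies uniformly to every term. Substituting, the factor $q^{(1/2-s)\deg\sqf{m}}$ merges with $|m|^{-w}$ to shift $w$ to $w+s-\tfrac12$; the Gauss sum prefactor and $\bar\tau(\epsilon^i)/\sqrt q$ land in exactly the positions required by $Z_2(1-s,w+s-\tfrac12;\delta_i)$, while residual boundary powers of $q$ collect to give the $q^{2s-1}q^{1/2-s}$ in the theorem. What then remains is the local identity that expands $L(1-s,\bar\chi_{m_0})\cdot P(s;m)$ as $\zeta_\OO(nw-n/2+1)^{-1}$ times $\sum_d\bar\chi_{m_0}(\hat d)b(d,m)/|d|^{1-s}$. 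This is verified prime-by-prime: at $\p\nmid m$ the Euler factor of $L(1-s,\bar\chi_{m_0})$ recovers the $k=0$ case of $b$, and at $\p^k\|m$ with $k>0$ the corresponding factor of $P(s;m)$ times the local geometric series in $\bar\chi_{m_0}(\p)|\p|^{-(1-s)}$ reproduces the three nontrivial cases of \eqref{eq:b}.

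The case $i=0$ requires separate handling because $m$ can itself be a perfect $n$-th power, making $\chi_{m_0}$ trivial. Split $Z_1(s,w;\delta_0)$ as a sum over $n$-th powers plus the remainder. The $n$-th-power part yields a multiple of $\zeta_\OO(s)=(1-q^{1-s})^{-1}$, whose elementary functional equation
\[
\zeta_\OO(s)=q^{2s-1}\,\frac{1-q^{-s}}{1-q^{s-1}}\,\zeta_\OO(1-s)
\]
accounts precisely for the prefactor $q^{2s-1}(1-q^{-s})/(1-q^{s-1})$ in the statement; the non-$n$-th-power contribution is processed exactly as in the $0<i<n$ case, with the $i=0$ values of $\bar\tau$ and the Gauss sum absorbed into the $\zeta_\OO$-factor. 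The \textbf{main technical obstacle} is the prime-by-prime matching of $L(1-s,\bar\chi_{m_0})\cdot P(s;m)$ with the weighted $d$-sum defining $Z_2$: this requires a careful case analysis against the four cases of $b(\p^j,\p^k)$ in \eqref{eq:b}, and clarifies why the extra factor $\zeta_\OO(nw-n/2+1)$ must be built into the definition of $Z_2$ in order for the functional equation to close up cleanly.
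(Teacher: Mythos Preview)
Your approach is the same as the paper's: apply the functional equation~(\ref{fe_incomplete}) of $L(s,\chi_{m_0})$ term by term in the expansion~(\ref{eq:4}), and then identify the result with $Z_2(1-s,w+s-\tfrac12;\delta_i)$. The paper's actual proof line is literally ``a direct computation utilizing the functional equation~(\ref{fe_incomplete}).''

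Two small slips in your write-up are worth correcting. First, the functional equation produces $|\sqf{m}|^{1/2-s}$, not $|m|^{1/2-s}$, so it does not by itself effect the shift $w\mapsto w+s-\tfrac12$; the missing factor $(|m|/|\sqf{m}|)^{1/2-s}$ must be absorbed into the local correction. The paper makes this explicit by setting $Q(s;m)=P(1-s;m)\,(m/\sqf{m})^{1/2-s}$ and rewriting $Z_2$ as $\sum_m \frac{g(1,\epsilon,\chi_{m_0})}{\sqrt{|\sqf m|}}\,\frac{L(s,\bar\chi_{m_0})\,Q(s;m)}{|m|^w}$. Second, the factor $\zeta_\OO(nw-\tfrac n2+1)$ is not part of any ``fixed $m$'' identity; it only appears after summing over all powers of each prime (it is $\prod_\p(1-\np^{n/2-1-nw})^{-1}$), so it should not sit inside your claimed local identity for $L(1-s,\bar\chi_{m_0})\cdot P(s;m)$.

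For the local matching itself the paper takes a shortcut you might find cleaner than a direct case analysis against~(\ref{eq:b}): having already computed the $\p$-part $H_2$ and its functional equation with $H_1$ in Theorem~\ref{thm:h_i_fe}, it simply checks that the $Q$-based $\p$-part $H_2'$ satisfies the \emph{same} functional equation with $H_1$, whence $H_2'=H_2$. Your proposed direct verification would of course work as well.
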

This is proved in Section \ref{section_z_i_fe}.

The second main theorem is 
\begin{theorem}\label{thm:Z1Z2}
The double Dirichlet series $Z_1$ and $Z_2$ are rational functions of
$x=q^{-s}$ and $y=q^{-w}$.  Explicitly,
\begin{equation}\label{eqn:z1rf}
Z_1(s,w)=\frac{1-q^2xy}{(1-qx)(1-qy)(1-q^{n+1}x^ny^n)}\textrm{,}
\end{equation}
and
\begin{equation}\label{eqn:z2rf}
 Z_2(s,w)=\frac{1-q^{3n/2}x^{n-1}y^n+
\sum_{i=1}^{n-1}\left(\tau(\epsilon^i) q^{i-1+i/2}x^{i-1}y^i
-\tau(\epsilon^i) q^{3i/2}x^iy^i\right)}
{(1-qx)(1-q^{n/2+1}y^n)(1-q^{3n/2}x^ny^n)}\mbox{.}
\end{equation}
\end{theorem}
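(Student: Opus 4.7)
The plan is to deduce rationality and the explicit form of $Z_1$ directly from the $L$-series representation (\ref{eq:4}), and then obtain $Z_2$ by applying the functional equation of Theorem \ref{thm:fe}.

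First I would establish the shape of $Z_1$. Using (\ref{eq:4}), stratify $m = m_0 m_1^n$ by its $n$-th-powerfree part $m_0$: for $m_0 \neq 1$, $L(s, \chi_{m_0})$ is a polynomial in $x = q^{-s}$ of bounded degree, while for $m_0 = 1$ it equals $\zeta_\OO(s) = 1/(1-qx)$. The inner geometric sum over $m_1$ (combined with the diagonal contribution of $a(d,m)$, where $a(\p^{jn}, \p^{jn}) = \np^{(n-1)j}$ is large) produces a pole at $1 - q^{n+1} x^n y^n = 0$. The free variation of $m$ over all monics yields the factor $1/(1-qy)$, and the $m_0 = 1$ stratum contributes $1/(1-qx)$ through $\zeta_\OO(s)$. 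Combining these, one obtains $Z_1 = N_1(x,y) / \left[(1-qx)(1-qy)(1-q^{n+1} x^n y^n)\right]$ for some polynomial $N_1$ of bounded bidegree.

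Second, I would pin down $N_1$ by matching low-order coefficients. Setting $d = 1$ in (\ref{eq:2}) and using $a(1,m) = 1$ (from (\ref{eq:a})) shows that the $x^0$-coefficient of $Z_1$ equals $\sum_{m \in \OOmon} |m|^{-w} = 1/(1-qy)$; symmetrically the $y^0$-coefficient is $1/(1-qx)$. These conditions force $N_1(0,y) = N_1(x,0) = 1$, reducing $N_1$ to a single unknown. A direct computation of the $xy$-coefficient of $Z_1$ from the definition (where the vanishing $\sum_{\deg d = 1} \chi_m(d) = 0$ for linear non-$n$-th-power $m$ kills the off-diagonal terms) fixes this remaining coefficient to yield $N_1(x,y) = 1 - q^2 xy$.

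Third, I would derive (\ref{eqn:z2rf}) from Theorem \ref{thm:fe}. Inverting the functional equation recovers each $Z_2(s,w;\delta_i)$ from $Z_1(1-s, w+s-1/2;\delta_i)$ up to the prefactors $q^{2s-1}(1-q^{-s})/(1-q^{s-1})$ (for $i = 0$) and $q^{2s-1} q^{1/2-s} \bar\tau(\epsilon^i)/\sqrt{q}$ (for $0 < i < n$). I would decompose the explicit rational expression for $Z_1$ along residue classes $\deg m \equiv i \pmod n$ using roots-of-unity averaging, substitute $(s,w) \mapsto (1-s,w+s-1/2)$, and reassemble. The Gauss-sum coefficients $\tau(\epsilon^i)$ appearing in (\ref{eqn:z2rf}) emerge directly from these prefactors. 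The main obstacle will be this residue-class bookkeeping: the single closed form for $Z_1$ mixes all $\delta_i$-components, so isolating them and handling the two regimes ($i=0$ versus $0 < i < n$) separately is the most delicate step.
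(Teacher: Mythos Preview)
Your approach to $Z_2$ via the functional equation matches the paper's: decompose $Z_1$ into residue classes $Z_1(s,w;\delta_i)$, invert Theorem~\ref{thm:fe}, and reassemble.

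For $Z_1$, however, your first step has a genuine gap. You assert that ``combining these, one obtains $Z_1 = N_1(x,y)/[(1-qx)(1-qy)(1-q^{n+1}x^ny^n)]$ for some polynomial $N_1$ of bounded bidegree,'' but the argument is heuristic: you name plausible sources of poles without proving these exhaust them, and you give no actual bound on $\deg N_1$. Your coefficient-matching step then implicitly assumes $N_1$ has bidegree at most $(1,1)$: from $N_1(0,y)=N_1(x,0)=1$ you get $N_1 = 1 + xy\,R(x,y)$, and matching the single $xy$-coefficient determines $N_1$ only if $R$ is a constant. Nothing in your outline justifies that, and the $L$-functions $L(s,\chi_{m_0})$ have degrees growing with $\deg m_0$, so the cancellation producing a degree-two numerator is not automatic.

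The paper closes this gap differently. The key input is Proposition~\ref{prop:Lhatchi}, giving $\deg_x L(s,\hat\chi_m) < \deg m$ whenever $m_0 \neq 1$; equivalently (equation~(\ref{coeff_0})), $\sum_{\deg d = k} \chi_{m_0}(\hat d)\,a(d,m)=0$ for $k \geq \deg m$ unless $m$ is a perfect $n$th power. Combined with the reciprocity-based $s \leftrightarrow w$ symmetry, this yields the decomposition $Z_1(s,w) = Z_a(s,w) + Z_a(w,s) - Z_b(s,w)$, where in $Z_a$ and $Z_b$ only perfect $n$th powers $m$ survive. Those restricted sums have an explicit Euler product $T_a$, and $Z_a,Z_b$ are then extracted from $T_a$ by a Hadamard-type convolution against simple kernels, evaluated by residues. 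This produces~(\ref{eqn:z1rf}) directly, with no undetermined coefficients. Your vanishing observation $\sum_{\deg d = 1}\chi_m(d) = 0$ is exactly the degree-one instance of~(\ref{coeff_0}); the missing idea is to use it in \emph{all} degrees, which is what converts your heuristic into a proof.
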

This theorem is proved in Section \ref{section_eval}.

We conclude this introduction with some remarks to put our results in
the larger context of Weyl group multiple Dirichlet series.  The
general theory of Weyl group multiple Dirichlet series was introduced
in \cite{wmd1} in order to unify and extend several constructions
which had previously been studied.  In particular the series $Z_1$ and
$Z_2$ of the present paper are expected to be $n-2$ fold residues of
the $n$ variable Weyl group multiple Dirichlet series associated to
the root system $A_n$. Brubaker and Bump \cite{bb:wmd-fhl} have verified this
in the cubic case $n=3.$ This case is manageable because, thanks to
Patterson \cite{pa:cubic1,pa:cubic2}, we have a complete understanding of the
Fourier coefficients of the theta function on the 3-fold metaplectic
cover of $SL_2$ which arise
when we take the residues of the Dirichlet series constructed from
cubic Gauss sums.  For $n>3$ the precise nature of the coefficients of the
$n$-fold cover theta functions remains mysterious.  Nevertheless, there is
much evidence in favor of the expectation that the two series
constructed by Friedberg, Hoffstein and Lieman coincide with a
multiresidue of a Weyl group multiple Dirichlet series.  Indeed, one
of the motivations of this paper is to lay the groundwork for
investigating this question for $n>3$ by explicitly computing and
comparing the the relevant multiple Dirichlet series in the case of
the rational functional field.  For example, in \cite{ffwmd} the first
named author has explicitly computed the cubic $A_3$ multiple
Dirichlet series and checked that residues of this series give the two
series in Theorem \ref{thm:Z1Z2} of this paper when  $n=3.$

Finally we point out a curious connection between the series $Z_1, Z_2$ of
Theorem \ref{thm:Z1Z2} and their $\p$-parts.  Define the following
generating series $H_1,H_2$ constructed from the respective $\p$-parts
of $Z_1$ and $Z_2,$
\begin{equation}\begin{split}
  \label{eq:5}
  H_1(X,Y)&=\sum_{j,k\geq 0} a(\p^j,\p^k) X^jY^k,
 \\
  H_2(X,Y)&=(1-\np^{n/2-1}Y^{n})^{-1}
\sum_{j,k\geq 0} b(\p^j,\p^k)
    \frac{g(1,\epsilon,\chi_{\p^k})}{\sqrt{|\sqf{\p^{k}}|}} X^jY^k.
\end{split}\end{equation}
where $X=|\p|^{-s}, Y=|\p|^{-w}.$  Then we will show that
\begin{equation}
  \label{eq:6}
  \begin{split}
H_1(X,Y)&=  \frac{1-XY}{(1-X)(1-Y)(1-|\p|^{n-1}X^nY^n)}, \\
H_2(X,Y)&=\frac{1-\np^{n/2-1}X^{(n-1)}Y^{n}+\sum_{i=1}^{n-1}\frac{g(1,\epsilon^i,\chi_{\p})}{\sqrt{\np}}X^{(i-1)}Y^i\np^{(i-1)/2}(1-X)}{(1-X)(1-\np^{n/2-1}Y^n)(1-\np^{n/2}X^nY^n)}
%\frac{1-\left|\mathfrak{p}\right|^{n-1}X^{n-1}Y^n+\sum_{i=1}^{n-1}g(1,\epsilon^i,\mathfrak{p})\left|\mathfrak{p}\right|^{i-1}X^{i-1}Y^{i}(1-X)}{(1-X)(1-\left|\mathfrak{p}\right|^{n-1}Y^n)(1-\left|\mathfrak{p}\right|^nX^nY^n)}
\mbox{.}
  \end{split}
\end{equation}

Now note that the substitutions 
\begin{align*}
X & \longrightarrow  q x\\
 Y & \longrightarrow  q y\\
 \np & \longrightarrow  1/q\\
 g(1,\epsilon^i,\mathfrak{p})&\longrightarrow  \tau(\epsilon^i)
\end{align*}
transform $H_i$ into $Z_i$ for $i=1,2$.  This similarity between a 
rational function field multiple Dirichlet series and its $\p$-part 
seems to hold in a much wider context, see e.g. \cite{cfh-survey, ffwmd}.

\section{Gauss sums and $L$-Functions}

In this section we will define the Gauss sums and $L$-functions that
are the constituents of our double Dirichlet series.
We will mostly follow the notation of Patterson \cite{pa:ff1} but with
some adjustments to facilitate comparison with \cite{fhl:mds}.

As in the introduction,  $K$ is the rational
function field $\F_q(t)$ with polynomial ring $\OO=\F_q[t].$  We let
$\OOmon$ denote the subset of $\OO$ consisting of monic polynomials
and let
$K_\infty=\F_q((t))$ denote the field of Laurent series in $t^{-1}.$
Let $\mu_n=\{a\in \F_q:a^n=1\}$ and let $\chi:\F_q^\times\to\mu_n$ be
the character $a\mapsto a^{\frac {q-1}{n}}.$ 

In order to define Gauss sums we first need  an additive character on
$K_\infty.$  Let $e_0$ 
be a nontrivial additive character on the prime field $\F$ of $\F_q.$  
Use this to define a
character $e_\star$ of $\F_q$ by $e_\star(a)=e_0(\text{Tr}_{\F_q/\F}
a).$  Let $\omega$ be the global differential $dx/x^2.$  Finally
define the character $e$ of $K_\infty$ by
$e(y)=e_\star(\text{Res}_\infty (\omega y))$ for $y\in K_\infty.$
Note that
$$\{y \in K:e|y\OO=1\}=\OO.$$

For any $c\in \OO$, we'll use $c_0$ to indicate the $n\tha$-power
free part of $c$ and $\sqf{c}$ for the squarefree part of $c_0$. 
Fix an embedding $\epsilon$ from the the $n^{th}$ roots of unity of
$\F_q$ to $\C^\times$.   For $r,c\in \OO$ we define the Gauss sum
$$g(r,\epsilon, \chi_c)=\sum_{y \mod \sqf{c}} \epsilon\left(\prs{y}{c}\right)
e\left(\frac {ry}{\sqf{c}}\right).$$
We also need the Gauss sums associated to the finite field $\F_q.$
These are defined by 
$$\tau(\epsilon)=\sum_{j\in\mathbb{F}_q}
\epsilon\left(j^{(q-1)/n}\right)e_0\left(j\right)\mbox{.}$$ 

We define the $L$-function associated to $\chi_m$ by
\begin{equation}
\label{lfunction}
L(s,\chi_m)=\sum_{\substack{d\in\OOmon}}\chi_m(d)|d|^{-s}.
\end{equation}
When $m$ is $n\tha$-power free, the $L$-function satisfies a
functional equation which  we will describe now.  
Denote the conductor of the character $\chi_m$ by $\cond \chi_m$. 
Thus 
\[
|\cond\chi_m|=\begin{cases}
|\sqf{m}|&\deg m\equiv 0 (n)\\
q|\sqf{m}|&\deg m\not\equiv 0 (n).\\
                   \end{cases}
\]
Then the completed $L$-function  
\begin{equation}
L^*(s,\chi_m)=\begin{cases}
\frac{1}{1-q^{-s}}L(s,\chi_m)&\deg m\equiv 0 (n)\\
L(s,\chi_m)&\deg m\not\equiv 0 (n).\\
              \end{cases}
\end{equation}
satisfies the functional equation 
\begin{equation}
\label{fe_complete}
L^*(s,\chi_m)=q^{2s-1}|\cond \chi_m|^{1/2-s}\frac{g^*(1,\epsilon,\chi_m)}{|\cond \chi_m|^{1/2}}L^*(1-s,\bar{\chi}_m)\mbox{}
\end{equation}
where
\[
g^*(1,\epsilon,\chi_m)=\begin{cases}
                      g(1,\epsilon,\chi_m)&\deg m\equiv 0\mbox{ (mod n)}\\
                      \bar{\tau}(\epsilon^i) g(1,\epsilon,\chi_m)&\deg
                      m\equiv i\not\equiv 0\mbox{ (mod n)}. 
                     \end{cases} 
\]
>From the functional equation, we see that $L(s,\chi_m)$ is a
polynomial in $q^{-s}$  whose degree is one less than 
the degree of $\sqf m, $ if
$m$ is not a perfect $n\tha$ power. If $m=1,$ we recover the zeta
function
\begin{equation}
  \label{eq:7}
  \zeta_\OO(s)=\sum_{d\in\OOmon} |d|^{-s}=\frac{1}{1-q^{1-s}}.
\end{equation}

Expanding the components at infinity, we have the following functional equations
when $m$ is $n\tha$-power free,   $\deg m\equiv i\mbox{ (mod $n$)}$:
\begin{equation}
\label{fe_incomplete}
L(s, \chi_m)=\begin{cases}
	q^{2s-1}|\sqf{m}|^{1/2-s}\frac{g(1,\epsilon,\chi_m)}{|\sqf{m}|^{1/2}}\frac{1-q^{-s}}{1-q^{-(1-s)}}L(1-s,\bar{\chi}_m)&i=0\\
	q^{2s-1}(q|\sqf{m}|)^{1/2-s}\frac{\bar{\tau}(\epsilon^i)}{\sqrt{q}}\frac{g(1,\epsilon,\chi_m)}{|\sqf{m}|^{1/2}}L(1-s,\bar{\chi}_m)&0<i<n
             \end{cases}\textrm{.}
\end{equation}
This functional equation will be used in Section \ref{section_z_i_fe}
to relate $Z_1$ and $Z_2$.

We now introduce a modified $L$-function related to (\ref{lfunction}) 
by inserting the weighting factor $a(d,m)$.  Define 
\begin{equation}
\label{lfunction2}
L(s,\hat{\chi}_m)=
\sum_{\substack{d\in\OOmon}}\frac{\chi_{m_0}(\hat{d})a(d,m)}{|d|^s}\textrm{.}
\end{equation}
where $\hat{d}$ is the part of $d$ relatively prime to $m_0$.  
Since the weighting function is multiplicative, $L(s, \hat{\chi}_m)$
retains an Euler product,
\[
L(s,\hat{\chi}_m)=\prod_{\substack{\p\in\OOmon\\\textrm{irreducible}}}
(1
+\frac{\chi_{m_0}(\hat \p)a(\p,m)}{|\p|^s}
+\frac{\chi_{m_0}(\hat \p^2)a(\p^2,m)}{|\p|^{2s}}+\ldots)\textrm{.}
\]
Further, since $a(d,m)=1$ when $d$ and $m$ are coprime, this Euler 
product agrees with the original $L$-function Euler product for all but finitely many 
places.

We will relate this modified $L$-function
$L(s,\hat\chi_m)$ to $L(s,\chi_{m_0})$ and derive a bound on its degree as a
polynomial in $q^{-s}$, as long as $m$ is not a perfect $n\tha$-power.  
These properties are given in the following Proposition.

\begin{proposition}
  \label{prop:Lhatchi}
We have
\begin{equation*}
  L(s,\hat\chi_m)=L(s,\chi_{m_0}) P(s;m)
\end{equation*}
where $P(s;m)=\prod_\p P_\p(s;m)$ and $ P_\p(s;m)=$
\begin{equation*}
 \begin{cases}
      \left(1-\chi_{m_0}(\p)\np^{-s}\right)
\dsum{k=0}{n\alpha-1}
      \frac{\chi_{m_0}(\p^k)a(\p^{n\alpha},\p^k)}{\np^{ks}}+\np^{-n\alpha
        s}\np^{(n-1)\alpha} & \text{if $\p\nmid m_0$,}\\
\dsum{k=0}{n\alpha} \frac{a(\p^{n\alpha+i},\p^k)}{\np^{ks}}
                    & \text{if $\p^i|| m_0, i\neq 0$.}\\
 \end{cases}
\end{equation*}
Here $\alpha$ and $i$ are the unique integers with $0\leq i<n$ and $\p^{n\alpha+i}\|m$.
In particular, for $m$ not a perfect $n\tha$ power, the degree of
$L(s,\hat\chi_m)$ as a polynomial in $q^{-s}$ is less than the degree
of $m.$ 
\end{proposition}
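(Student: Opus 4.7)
The plan is to establish the factorisation one prime at a time via the Euler product for $L(s,\hat\chi_m)$, then tally degrees. Because $a(d,m)$ is multiplicative in $d$ and the map $d\mapsto\chi_{m_0}(\hat d)$ is multiplicative on coprime arguments, we have
\[
L(s,\hat\chi_m)=\prod_\p L_\p(s,\hat\chi_m),\qquad L_\p(s,\hat\chi_m)=\sum_{j\geq 0}\frac{\chi_{m_0}(\hat{\p}^j)\,a(\p^j,\p^{k})}{\np^{js}},
\]
where $\p^k\|m$. The proposition then reduces to the prime-by-prime identity $L_\p(s,\hat\chi_m)=L_\p(s,\chi_{m_0})\,P_\p(s;m)$.

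Write $k=n\alpha+i$ with $0\leq i<n$ and split into three cases. When $\p\nmid m$ (so $\alpha=i=0$) one has $a(\p^j,1)=1$ and $\hat{\p}^j=\p^j$, so the local sum is $(1-\chi_{m_0}(\p)\np^{-s})^{-1}=L_\p(s,\chi_{m_0})$, and the proposition's formula collapses to $P_\p=1$ (empty sum plus the $\np^0=1$ tail). When $\p\mid m$ but $\p\nmid m_0$ (so $\alpha\geq 1$, $i=0$), split the $j$-sum: for $0\leq j\leq n\alpha$ the weight $a(\p^j,\p^{n\alpha})$ is nonzero only at $j=nr$ with $0\leq r\leq\alpha$, producing a finite geometric sum in $W:=\np^{n-1-ns}$; for $j>n\alpha$ the min equals $n\alpha$, so $a(\p^j,\p^{n\alpha})=\np^{(n-1)\alpha}$ and, using $\chi_{m_0}(\p)^n=1$, the tail is an infinite geometric series summing to a scalar multiple of $(1-\chi_{m_0}(\p)\np^{-s})^{-1}$. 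Factoring $L_\p(s,\chi_{m_0})$ out of the whole expression leaves exactly $(1-\chi_{m_0}(\p)\np^{-s})\sum_{r=0}^{\alpha-1}W^r+W^\alpha$, which is the claimed $P_\p$. When $\p\mid m_0$ (so $i\geq 1$), one has $\hat{\p}^j=1$ for every $j$, and $a(\p^j,\p^{n\alpha+i})$ vanishes for $j>n\alpha+i$ (the min would then be $n\alpha+i\not\equiv 0\pmod n$) while for $j\leq n\alpha$ it is nonzero only at $j=nr$, $0\leq r\leq\alpha$. The resulting finite sum is the claimed $P_\p$, and $L_\p(s,\chi_{m_0})=1$ since $\p\mid m_0$.

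For the degree bound I would tally contributions. In $q^{-s}$, the local factor $P_\p$ has degree $0$, $n\alpha\deg\p$, and $n\alpha\deg\p$ in the three cases respectively, whereas the $\p$-contribution to $\deg m$ is $0$, $n\alpha\deg\p$, and $(n\alpha+i)\deg\p$. Only the third case produces a shortfall, of size $i\deg\p$ at each $\p\mid m_0$. Since $\sum_{\p\mid m_0}\deg\p=\deg\sqf{m}$ and $\deg L(s,\chi_{m_0})=\deg\sqf{m}-1$ (from the discussion following (\ref{fe_complete}), valid because $m$ is not a perfect $n$th power), summing gives
\[
\deg L(s,\hat\chi_m)\leq(\deg\sqf{m}-1)+(\deg m-\deg\sqf{m})=\deg m-1.
\]

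The main obstacle is the second case: one must recognise that the infinite tail $j>n\alpha$ is precisely what supplies the missing $\p$-factor of $L(s,\chi_{m_0})$, so that the quotient $L_\p(s,\hat\chi_m)/L_\p(s,\chi_{m_0})$ is the finite polynomial $P_\p$. The other cases and the degree tally are then just careful bookkeeping with the explicit values of $a(\p^j,\p^k)$.
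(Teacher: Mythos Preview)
Your proof is correct and follows essentially the same approach as the paper: compute the local Euler factor $L_\p(s,\hat\chi_m)$ directly from the definition of $a(\p^j,\p^k)$, recognise the geometric tail in the case $\p\nmid m_0$ as supplying the missing local $L$-factor, and observe that the $\p\mid m_0$ case is already a finite sum. The paper does this in two cases (merging your $\p\nmid m$ and $\p\mid m,\ \p\nmid m_0$), while you split into three; your degree tally is also more explicit than the paper's one-line remark, but the content is the same. One small omission: in the case $\p\mid m_0$ you cover $j>n\alpha+i$ and $j\le n\alpha$ but not the intermediate range $n\alpha<j\le n\alpha+i$; there $\min(j,n\alpha+i)=j$ is not divisible by $n$, so $a$ vanishes as well, and your conclusion stands.
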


\begin{proof}
Begin with the Euler product
\begin{align*}
L(s,\hat{\chi}_m)
=&\prod_\p\sum_{k=0}^\infty \frac{\chi_{m_0}(\hat{\p}^k)a(m,\p^k)}{\np^{ks}}\\
=&\prod_{\substack{\p^{n\alpha}\mid\mid m}}
	\sum_{k=0}^\infty \frac{\chi_{m_0}(\p^k)a(\p^{n\alpha},\p^k)}{\np^{ks}}	\cdot\prod_{\substack{\p^{n\alpha+i}\mid\mid m\\0<i<n}}
	\sum_{k=0}^\infty \frac{a(\p^{n\alpha+i},\p^k)}{\np^{ks}}\\
\end{align*}
For primes $\p$ with $i=0$---that is $\p\nmid m_0$ and
$\p^{n\alpha}||m$, say--- it follows from (\ref{eq:a}) that  the tails of the sum are a 
geometric series with common ratio $\chi_{m_0}(\p)\np^{-s}$.  Thus for
such $\p$ the $\p$ part is 
\begin{equation*}
\sum_{k=0}^{n\alpha-1} \frac{\chi_{m_0}(\p^k)a(\p^{n\alpha},\p^k)}{\np^{ks}}+\frac{\np^{-n\alpha s}\np^{(n-1)\alpha}}{1-\chi_{m_0}(\p)\np^{-s}}
=\left(1-\chi_{m_0}(\p)\np^{-s}\right)^{-1}P_{\p}(s;m)
\end{equation*}
where 
\begin{equation}
  \label{eq:3}
P_{\p}(s;m)=\sum_{k=0}^{n\alpha-1} \frac{\chi_{m_0}(\p^k)a(\p^{n\alpha},\p^k)}{\np^{ks}}\left(1-\chi_{m_0}(\p)\np^{-s}\right)+\np^{-n\alpha s}\np^{(n-1)\alpha}\mbox{.}  
\end{equation}

For primes such that $\p^i||m_0$ with $0<i<n$,
it follows from (\ref{eq:a}) that $a(\p^{n\alpha+i}, \p^k)=0$ for
$k>n\alpha,$ so the $\p$ part is a 
finite sum:
\begin{equation*}
P_{\p}(s;m)= \sum_{k=0}^{n\alpha}
\frac{a(\p^{n\alpha+i},\p^k)}{\np^{ks}}\mbox{.} 
\end{equation*}

Thus 
\begin{equation*}
  L(s,\hat\chi_m)=L(s,\chi_{m_0}) P(s;m)
\end{equation*}
as claimed.
The bound on the degree of $L(s,\hat\chi_m)$ follows from the bound on
the degree of $L(s,\chi_{m_0})$ for $m_0\neq 1$ and the degrees of the 
$P_p(s;m).$
\end{proof}

%We describe these $\p$ more explicitly:  specializing to $m=\p^{n\alpha+i}$,
%\begin{equation}
%\label{pnalpha}
%P(s;\p^{n\alpha+i})=\begin{cases}
%(1-\np^{-s})\left(\sum_{k=0}^\infty \frac{a(\p^{n\alpha},\p^k)}{\np^{ks}}\right) & i=0 \\
%\sum_{k=0}^\infty \frac{a(\p^{n\alpha+i},\p^k)}{\np^{ks}} & 0<i<n.
%                       \end{cases}
%\end{equation}

\section{Functional Equation:  $H_1\to H_2$}
Recall that the generating series $H_1(X,Y)$ and $H_2(X,Y)$ of
(\ref{eq:5}) define the
$p$-parts of $Z_1$ and $Z_2$, respectively.   We describe the
functional equations relating $H_1(X,Y)$ and $H_2(X,Y).$   These will
be used to prove the global functional equation relating $Z_1$ to
$Z_2.$

The functional equations are a direct consequence of the following
Proposition. 
\begin{proposition}\label{prop:h1h2}
The generating series $H_1(X,Y)$ and $H_2(X,Y)$ are rational functions
of $X$ and $Y.$  Explicitly
\begin{equation}\label{eq:10}
  H_1(X,Y)=  \frac{1-XY}{(1-X)(1-Y)(1-|\p|^{n-1}X^nY^n)},
\end{equation}
and
\begin{equation}\label{eq_h2_rat}
H_2(X,Y)=\frac{1-\np^{n/2-1}X^{(n-1)}Y^{n}+\sum_{i=1}^{n-1}\frac{g(1,\epsilon^i,\chi_{\p})}{\sqrt{\np}}X^{(i-1)}Y^i\np^{(i-1)/2}(1-X)}{(1-X)(1-\np^{n/2-1}Y^n)(1-\np^{n/2}X^nY^n)}
\end{equation}
\end{proposition}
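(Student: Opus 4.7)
The plan is to compute each generating series directly from the definitions of $a(\p^j,\p^k)$ and $b(\p^j,\p^k)$ by identifying which pairs $(j,k)$ contribute, collecting the surviving terms into geometric series in $\np^{n-1}X^nY^n$ (resp.\ $\np^{n/2}X^nY^n$), and then clearing denominators.

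For $H_1(X,Y)$: by (\ref{eq:a}), $a(\p^j,\p^k)$ is nonzero only when $\min(j,k)=n\alpha$ for some $\alpha\geq 0$, in which case it equals $\np^{(n-1)\alpha}$. I would split the double sum into the two regions $j\leq k$ (so $j=n\alpha$, $k\geq n\alpha$) and $k<j$ (so $k=n\alpha$, $j>n\alpha$). Each region collapses to a geometric series in $Y$ or $X$ multiplied by a geometric series in $\np^{n-1}X^nY^n$. Adding the two pieces gives
\[
H_1(X,Y)=\Bigl(\tfrac{1}{1-Y}+\tfrac{X}{1-X}\Bigr)\frac{1}{1-\np^{n-1}X^nY^n},
\]
and routine simplification of the bracket produces the numerator $1-XY$.

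For $H_2(X,Y)$: the key preliminary observation is that the Gauss sum factor depends only on $k\bmod n$. Writing $k=n\alpha+i$ with $0\leq i<n$, the $n\tha$-power-free part of $\p^k$ is $\p^i$ and $\sqf{\p^k}$ is $1$ if $i=0$ and $\p$ if $i>0$; using multiplicativity of the power residue symbol in the denominator, $g(1,\epsilon,\chi_{\p^i})=g(1,\epsilon^i,\chi_{\p})$ when $i>0$, and equals $1$ when $i=0$. Next I would partition the sum according to the cases permitted by (\ref{eq:b}): (A) $k=0$, (B) $k=n\alpha$ with $\alpha\geq 1$, subcases $j\geq n\alpha$ and $j=n\alpha-1$, and (C) $k=n\alpha+i$ with $0<i<n$ and $j=n\alpha+i-1$. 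Case (A) contributes $1/(1-X)$; case (B), after summing the $j$ series $\sum_{j\geq n\alpha}X^j$ and combining the $(\np-1)$ and $-1$ terms, contributes
\[
\frac{\np X-1}{(1-X)X\np}\cdot\frac{\np^{n/2}X^nY^n}{1-\np^{n/2}X^nY^n};
\]
case (C) contributes
\[
\frac{1}{1-\np^{n/2}X^nY^n}\sum_{i=1}^{n-1}\frac{g(1,\epsilon^i,\chi_\p)}{\sqrt{\np}}\np^{(i-1)/2}X^{i-1}Y^i.
\]

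To finish, I would combine (A) and (B) over the common denominator $(1-X)\np X(1-\np^{n/2}X^nY^n)$; the cross terms $\pm\np^{n/2+1}X^{n+1}Y^n$ cancel and the remaining numerator simplifies to $1-\np^{n/2-1}X^{n-1}Y^n$, giving $(A)+(B)=(1-\np^{n/2-1}X^{n-1}Y^n)/\bigl((1-X)(1-\np^{n/2}X^nY^n)\bigr)$. Adding (C) over the same denominator and dividing by the prefactor $1-\np^{n/2-1}Y^n$ yields (\ref{eq_h2_rat}). The main obstacle is the bookkeeping in (B): one has to correctly combine the opposite-sign contributions from $j\geq n\alpha$ and $j=n\alpha-1$ and see that the resulting numerator factors so as to introduce the expected $(1-\np^{n/2-1}X^{n-1}Y^n)$; everything else is a mechanical sum of geometric series.
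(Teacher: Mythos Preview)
Your proposal is correct and follows essentially the same approach as the paper: for $H_1$ the paper simply declares (\ref{eq:10}) ``obvious from the definition,'' and for $H_2$ it performs exactly your case split---$k=0$, then $k=n\alpha$ with $\alpha\geq 1$ broken into $j\geq n\alpha$ and $j=n\alpha-1$, then $k=n\alpha+i$ with $0<i<n$---evaluates the same geometric series, and combines over the common denominator $(1-X)(1-\np^{n/2}X^nY^n)$. The only cosmetic difference is that you merge the two subcases of (B) before adding (A), whereas the paper lists all four summands separately and then combines; the algebra and the key identification $g(1,\epsilon,\chi_{\p^i})=g(1,\epsilon^i,\chi_\p)$ are identical.
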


\begin{proof}
Equation (\ref{eq:10}) is obvious from the definition (\ref{eq:a}) 
of the $a(\p^k,\p^l).$
The evaluation of $H_2(X,Y)$ is simply a matter of recognizing 
geometric series.  From the definitions of
$b(\p^j,\p^k)$ in \eqref{eq:b} and $H_2$ in \eqref{eq:5}, we have 
\begin{align*}
(1-\np^{n/2-1}Y^n)H_2(X,Y)
=&\sum_{j=0}^\infty \frac{g(1,\epsilon,\chi_{\p^0})}{\sqrt{|\sqf{\p^{0}}|}}X^jY^0\\
+&\sum_{\alpha=1}^\infty\sum_{j=n\alpha}^\infty \np^{n\alpha/2-1}(\np-1)\frac{g(1,\epsilon,\chi_{\p^{n\alpha}})}{\sqrt{|\sqf{\p^{n\alpha}}|}}X^jY^{n\alpha}\\
+&\sum_{\alpha=1}^\infty-\np^{n\alpha/2-1}\frac{g(1,\epsilon,\chi_{\p^{n\alpha}})}{\sqrt{|\sqf{\p^{n\alpha}}|}}X^{n\alpha-1}Y^{n\alpha}\\
+&\sum_{\alpha=0}^\infty\sum_{i=1}^{n-1}\np^{(n\alpha+i-1)/2}\frac{g(1,\epsilon,\chi_{\p^{n\alpha+i}})}{\sqrt{|\sqf{\p^{n\alpha+i}}|}}X^{n\alpha+i-1}Y^{n\alpha+i}
\end{align*}
Evaluating the geometric series
\begin{align*}
(1-\np^{n/2-1}Y^n)H_2(X,Y)
=&\frac{1}{1-X}+\frac{\np^{n/2-1}(\np-1)X^nY^n}{(1-X)(1-\np^{n/2}X^nY^n)}\\
+&\frac{-\np^{n/2-1}X^{n-1}Y^n}{1-\np^{n/2}X^nY^n}+\sum_{i=1}^{n-1}\frac{\frac{g(1,\epsilon^i,\chi_\p)}{\sqrt{\np}}\np^{(i-1)/2}X^{i-1}Y^i}{(1-\np^{n/2}X^nY^n)}\mbox{.}
\end{align*}
Equation (\ref{eq_h2_rat}) follows by combining the four summands above.
\end{proof}

For $0\leq i<n$, define 
\begin{equation}\begin{split}
  \label{p_part_i}
  H_1(X,Y;\delta_i)&=\sum_{\substack{j,k\geq 0\\k\equiv i (n)}} a(\p^j,\p^k) X^jY^k,
 \\
  H_2(X,Y;\delta_i)&=(1-\np^{n/2-1}Y^{n})^{-1}
\sum_{\substack{j,k\geq 0\\k\equiv i (n)}} b(\p^j,\p^k)
    \frac{g(1,\epsilon,\chi_{\p^k})}{\sqrt{|\sqf{\p^{k}}|}}
    \bar\chi_{\p^k}(\hat \p^j)X^jY^k.
\end{split}\end{equation}
We have shown in Proposition \ref{prop:h1h2}
that $H_1$ and $H_2$ are both rational functions in
$\np^{-s}$ and $\np^{-w}$ and it is clear from this Proposition that 
\begin{equation}\label{eqn:h1i}
H_1(X,Y;\delta_i)=\begin{cases}
                   \frac{1-XY^n}{(1-X)(1-Y^n)(1-\np^{n-1}X^nY^n)} & i\equiv 0\\
                   \frac{(1-X)Y^i}{(1-X)(1-Y^n)(1-\np^{n-1}X^nY^n)} & i\not\equiv 0\\
                  \end{cases}
\end{equation}
and
\begin{equation}\label{eqn:h2i}
H_2(X,Y;\delta_i)=\begin{cases}
                   \frac{1-\np^{n/2-1}X^{(n-1)}Y^{n}}{(1-X)(1-\np^{n/2-1}Y^n)(1-\np^{n/2}X^nY^n)} & i\equiv 0\\
                   \frac{\frac{g(1,\epsilon^i,\chi_{\p})}{\sqrt{\np}}X^{(i-1)}Y^i\np^{(i-1)/2}(1-X)}{(1-X)(1-\np^{n/2-1}Y^n)(1-\np^{n/2}X^nY^n)} & i\not\equiv 0\\
                  \end{cases}
\end{equation}

The following theorem establishes a functional equation relating $H_1$ to $H_2$.
\begin{theorem}
\label{thm:h_i_fe}
We have the functional equation 
\[
H_1(\p^{-s},\p^{-w};\delta_i)=
\begin{cases}
\frac{1-\np^{-(1-s)}}{1-\np^{-s}}H_2(\p^{-(1-s)},\p^{-(w+s-1/2)};\delta_0) & i=0 \\
\frac{\sqrt{\np}}{g(1,\epsilon^i,\chi_{\p})}\np^{s-1/2}H_2(\p^{-(1-s)},\p^{-(w+s-1/2)};\delta_i) & 0<i<n\\
\end{cases}
\]
\end{theorem}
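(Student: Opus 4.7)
The plan is to verify the functional equation by direct computation, since Proposition~\ref{prop:h1h2} together with formulas (\ref{eqn:h1i}) and (\ref{eqn:h2i}) already gives closed-form rational expressions for both sides. The proof reduces to substituting $X=\np^{-s}$, $Y=\np^{-w}$ on the left and $X=\np^{-(1-s)}$, $Y=\np^{-(w+s-1/2)}$ on the right, then comparing numerators and denominators as monomials in $\np$.

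For $i=0$, I would first transform the three denominator factors of $H_2(\np^{-(1-s)},\np^{-(w+s-1/2)};\delta_0)$. The factor $1-\np^{n/2-1}Y^n$ collapses to $1-\np^{n-1-ns-nw}$, and $1-\np^{n/2}X^nY^n$ collapses to $1-\np^{-nw}$; these match the corresponding factors in the denominator of $H_1(\np^{-s},\np^{-w};\delta_0)$. The numerator $1-\np^{n/2-1}X^{n-1}Y^n$ simplifies to $1-\np^{-s-nw}$, which is precisely $1-XY^n$ evaluated at $X=\np^{-s}$, $Y=\np^{-w}$. All matching factors cancel, and the residual discrepancy between $(1-\np^{-s})$ in the $H_1$ denominator and $(1-\np^{-(1-s)})$ in the $H_2$ denominator gives exactly the prefactor $(1-\np^{-(1-s)})/(1-\np^{-s})$.

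For $0<i<n$ the strategy is identical. The same two denominator factors transform into $1-\np^{n-1-ns-nw}$ and $1-\np^{-nw}$, matching two of the three denominator factors of $H_1(\np^{-s},\np^{-w};\delta_i)$; the remaining $(1-X)$ in the $H_2$ denominator cancels the explicit $(1-X)$ factor already present in its numerator, mirroring the corresponding $(1-X)$ in the numerator of $H_1(X,Y;\delta_i)$. On the numerator side, the contributions $\np^{(i-1)/2}$, $(X')^{i-1}$, and $(Y')^i$ combine under the substitution to $\np^{1/2-s-iw}$, so after dividing by $\sqrt{\np}$ the numerator of $H_2$ becomes $g(1,\epsilon^i,\chi_\p)\,\np^{-s-iw}$. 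Dividing the $H_1$ numerator $\np^{-iw}$ by this yields $\np^{s}/g(1,\epsilon^i,\chi_\p)=\sqrt{\np}\,\np^{s-1/2}/g(1,\epsilon^i,\chi_\p)$, as claimed.

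The only obstacle is arithmetic rather than conceptual: the half-integer shift $w\mapsto w+s-1/2$ must be tracked exactly against the fractional exponents $n/2-1$, $(i-1)/2$, and $1/\sqrt{\np}$ appearing in $H_2$. Conceptually, the identity reflects the fact that the denominators of $H_1$ and $H_2$ were designed so that, apart from the single exceptional factor $1-X$, they permute among themselves under $(s,w)\mapsto(1-s,w+s-1/2)$; this is precisely why the resulting prefactor is so simple and why no cross-terms survive.
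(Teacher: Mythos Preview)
Your proposal is correct and follows exactly the approach of the paper: the paper's proof consists of the single sentence ``The proof is by a direct computation using Equations (\ref{eqn:h1i}) and (\ref{eqn:h2i}),'' and you have simply written out that computation in detail. Your tracking of the exponents under the substitution $(s,w)\mapsto(1-s,w+s-1/2)$ is accurate in both cases, including the delicate half-integer bookkeeping in the $0<i<n$ case.
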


\begin{proof}
The proof is by a direct computation using Equations (\ref{eqn:h1i})
and (\ref{eqn:h2i}).
\end{proof}

\section{Functional Equation: $Z_1\to Z_2$}

\label{section_z_i_fe}

There is a set of functional equations relating $Z_1$ and $Z_2$.  These will be described 
in this section.  Define
\begin{equation}
\label{Q_P_fe}
Q(s;m)=\frac{P(1-s;m)}{(m/\sqf{m})^{s-1/2}}\mbox{.}
\end{equation}
With the expansion of $P$ as an Euler product, we see that $Q$ is also an Euler product 
supported in the primes dividing $m$.
\begin{align*}
Q(s;m)
=&\prod_{\substack{\p^{n\alpha+i}\mid\mid m\\i=0}}\frac{1}{\np^{n\alpha(s-1/2)}}P_{\p}(1-s;m)
\cdot\prod_{\substack{\p^{n\alpha+i}\mid\mid m\\0<i<n}}\frac{1}{\np^{(n\alpha+i-1)(s-1/2)}}P_{\p}(1-s;m)\\
%=&\prod_{\substack{\p^{n\alpha+i}\mid\mid m\\i=0}}\frac{\bar{\chi}_{m_0}(\p^{n\alpha})}{\np^{n\alpha(s-1/2)}}P_{\p}(1-s;m)
%\cdot\prod_{\substack{\p^{n\alpha+i}\mid\mid m\\0<i<n}}\frac{\bar{\chi}_{m_0}(\hat{\p}^{n\alpha+i-1})}{\np^{(n\alpha+i-1)(s-1/2)}}P_{\p}(1-s;m)
\end{align*}

\begin{proposition} Define 
\[
Z_2'(s,w)=\sum_{m}\frac{g(1,\epsilon,\chi_{m_0})}{\sqrt{|\sqf{m}|}}\frac{L(s,\bar{\chi}_{m_0})Q(s;m)}{|m|^w}\mbox{.}
\]
We have $Z_2'=Z_2$.
\end{proposition}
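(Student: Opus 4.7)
The plan is to turn $Z_2$ into the shape of $Z_2'$ by absorbing the $\zeta$-factor into the $m$-summation and then collapsing the inner $d$-sum via an Euler product. Writing
\[
\zeta_\OO(nw-n/2+1)=\sum_{f\in\OOmon}|f|^{n/2-1-nw}
\]
and reindexing by $M=f^nm$, one uses $(f^nm)_0=m_0$ and $\sqf{f^nm}=\sqf{m}$ to conclude that $g(1,\epsilon,\chi_{M_0})/\sqrt{|\sqf{M}|}$ depends only on $M$, yielding
\[
Z_2(s,w)=\sum_{M}\frac{g(1,\epsilon,\chi_{M_0})}{\sqrt{|\sqf{M}|}\,|M|^w}\sum_{\substack{f\in\OOmon\\ f^n\mid M}}|f|^{n/2-1}\,\tilde L_{M/f^n}(s),
\]
where $\tilde L_m(s):=\sum_d \bar\chi_{m_0}(\hat d)\,b(d,m)/|d|^s$ is the inner $d$-sum. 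Comparing with the definition of $Z_2'$ reduces the proposition to the coefficient identity
\[
\sum_{f^n\mid M}|f|^{n/2-1}\tilde L_{M/f^n}(s)=L(s,\bar\chi_{M_0})\,Q(s;M)\qquad\text{for every }M\in\OOmon.
\]

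Both sides factor as Euler products over primes $\p$ of $\OO$: the right via Proposition \ref{prop:Lhatchi} and the definition of $Q$, the left because $b$ is multiplicative and $\bar\chi_{m_0}(\hat d)=\prod_\p\bar\chi_{m_0}(\hat\p^{v_\p(d)})$. Thus it suffices to verify, at each $\p$ with $v_\p(M)=n\alpha+i$ and $0\le i<n$, the local identity
\[
\sum_{\beta=0}^{\alpha}\np^{\beta(n/2-1)}\,\tilde L_{\p,\,n\alpha+i-n\beta}(s)=L_\p(s,\bar\chi_{M_0})\,Q_\p(s;M),
\]
where $\tilde L_{\p,k}(s)$ denotes the $\p$-part of $\tilde L_m$ when $v_\p(m)=k$. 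Reading $\tilde L_{\p,k}$ off directly from \eqref{eq:b} gives three cases: $k=0$ produces $(1-u)^{-1}$ with $u:=\bar\chi_{M_0}(\p)\np^{-s}$; $k=n\alpha+i$ with $0<i<n$ produces the single term $\np^{(k-1)(1/2-s)}$; and $k=n\alpha$ with $\alpha\ge1$ produces $\np^{n\alpha/2-1}u^{n\alpha-1}(\np u-1)/(1-u)$ after combining the $j=n\alpha-1$ entry with the geometric tail $j\ge n\alpha$.

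For $0<i<n$ only the single-term case appears, $L_\p=1$ since $\p\mid M_0$, and both the LHS and $Q_\p$ collapse to the same geometric sum in $\beta$. The case $i=0$ is the main obstacle: the LHS mixes the Case C terms ($\beta<\alpha$) with the boundary Case A term ($\beta=\alpha$), while the RHS involves the two-part expression for $P_\p(1-s;M)$ coming from \eqref{eq:3}. After factoring out $(1-u)^{-1}$, the residual LHS becomes a geometric series in $\np u^n$; a parallel reduction of $L_\p\cdot Q_\p$, using $\chi_{M_0}(\p)^n=1$ to clear unwanted powers of $\np^s$, brings both sides to the common closed form
\[
\frac{\np^{n\alpha/2}\,u^{n\alpha+n-1}(\np u-1)+\np^{\alpha(n/2-1)}(u^{n-1}-1)}{(1-u)(\np u^n-1)}.
\]
The obstacle is purely bookkeeping, resolved by the uniform substitution to the single variable $u$.
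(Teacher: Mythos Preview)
Your argument is correct, and it is genuinely different from the paper's. You unfold the zeta factor into a sum over $f$, reindex by $M=f^nm$, and reduce $Z_2=Z_2'$ to the $M$-by-$M$ identity $\sum_{f^n\mid M}|f|^{n/2-1}\tilde L_{M/f^n}(s)=L(s,\bar\chi_{M_0})Q(s;M)$, which you then verify prime by prime using the explicit values of $b(\p^j,\p^k)$. The paper instead defines a candidate $\p$-part $H_2'$ for $Z_2'$, observes from the very definition \eqref{Q_P_fe} of $Q$ that $H_1(\delta_i)$ and $H_2'(\delta_i)$ satisfy exactly the functional equation of Theorem~\ref{thm:h_i_fe}, and concludes $H_2'=H_2$ because that functional equation determines $H_2$ uniquely from $H_1$. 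Your route is self-contained and bypasses Theorem~\ref{thm:h_i_fe} entirely, at the cost of the somewhat heavier bookkeeping in the $i=0$ case; the paper's route is shorter but leans on the functional-equation machinery already in place, which fits the paper's broader theme of deducing identities among the $Z_i$ from identities among the $H_i$. Either way the content is the same local identity at each $\p$; you prove it by matching closed forms, the paper proves it by matching both sides against $H_1$.
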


\begin{proof}
Define
\begin{equation}
\label{h2def}
H_2'(\p^{-s},\p^{-w};\delta_i)=\begin{cases}
                \left(1-\p^{-s}\right)^{-1}\sum_{k=0}^{\infty}\frac{Q(s;\p^{nk})}{\p^{nkw}} & i=0\\
                \frac{g(1,\epsilon^i,\chi_{\p})}{\sqrt{\np}}\sum_{k=0}^{\infty}\frac{Q(s;\p^{nk+i})}{\p^{nkw}} & 0<i<n\\
                               \end{cases}\mbox{.}
\end{equation}
Then $H_2'(\p^{-s},\p^{-w})=\sum_{i=0}^{n-1}H_2'(\p^{-s},\p^{-w};\delta_i)$ is the $\p$-part of $Z_2'$.
We will show that $H_2'$ and $H_2$ both satisfy the functional equations with $H_1$ shown in 
theorem \ref{thm:h_i_fe} and therefore $H_2'=H_2$.
The result follows since, for fixed $m$, the $L$-functions, $P$, and
$Q$ all have Euler products.  

As a result of the definition in equation \eqref{Q_P_fe}, the
$\p$-parts of $Q$ and  
$P$ satisfy
\[
P(s;\p^{n\alpha+i})=\begin{cases}
                           \p^{n\alpha(1/2-s)}Q(1-s;\p^{n\alpha})&i=0\\
                           \p^{(n\alpha+i-1)(1/2-s)}Q(1-s;\p^{n\alpha+i})&0<i<n\mbox{.}
                          \end{cases}
\]

Therefore, we relate $H_1$ to $H_2'$ by 
\begin{align*}
H_1(\p^{-s},\p^{-w};\delta_0)
=&(1-\np^{-s})^{-1}\sum_{k=0}^\infty \frac{P(s;\p^{nk})}{\np^{nkw}}\\
=&(1-\np^{-s})^{-1}\sum_{k=0}^\infty \frac{Q(1-s;\p^{nk})\np^{nk(1/2-s)}}{\np^{nkw}}\\
=&(1-\np^{-s})^{-1}\sum_{k=0}^\infty \frac{Q(1-s;\p^{nk})}{\np^{nk(w+s-1/2)}}\\
=&\frac{1-\np^{-(1-s)}}{1-\np^{-s}}H_2'(\p^{-(1-s)},\p^{-(w+s-1/2)};\delta_0)\mbox{.}
\end{align*}
This is exactly the functional equation satisfied by $H_2$ in Theorem \ref{thm:h_i_fe}.  A similar computation 
shows that 
\[
H_1(\p^{-s},\p^{-w};\delta_i)=\frac{\sqrt{\np}}{g(1,\epsilon^i,\chi_{\p})}\np^{s-1/2}H_2(\p^{-(1-s)},\p^{-(w+s-1/2)};\delta_i)
\]
for $0<i<n$.  Thus, $H_2'(X,Y;\delta_i)=H_2(X,Y;\delta_i)$ for all $0\leq i<n$ and this completes the proof.
\end{proof}

For $0\leq i<n$, define 
\[
Z_1(s,w;\delta_i)=\sum_{\substack{m\in \OOmon\\\deg m\equiv i\ 
(\text{mod\ } n)
}}\frac{L(s,\chi_{m_0})P(s;m)}{|m|^w}
\]
and
\[
Z_2(s,w;\delta_i)=\sum_{\substack{m\in \OOmon\\ \deg m\equiv i\ 
(\text{mod\ } n) }}\frac{g(1,\epsilon,\chi_{m_0})}{\sqrt{|\sqf{m}|}}\frac{L(s,\bar{\chi}_{m_0})Q(s;m)}{|m|^w}\mbox{.}
\]

\begin{theorem}
We have the functional equation
\[
Z_1(s,w;\delta_i)=\left\{\begin{array}{ll}
q^{2s-1}\frac{1-q^{-s}}{1-q^{s-1}} Z_2(1-s,w+s-\frac{1}{2};\delta_0)
&  \text{for $i=0$ }\\
q^{2s-1}q^{1/2-s}\frac{\bar{\tau}(\epsilon^i)}{\sqrt{q}} 
Z_2(1-s,w+s-\frac{1}{2};\delta_i) & \text{for $0<i<n$. }
\end{array}\right.
\]
\end{theorem}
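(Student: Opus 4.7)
The plan is to apply the $L$-function functional equation \eqref{fe_incomplete} to every summand of $Z_1(s,w;\delta_i)=\sum_m L(s,\chi_{m_0})P(s;m)/|m|^w$, combine it with the inversion of \eqref{Q_P_fe}, and recognize the result as $Z_2(1-s,w+s-\tfrac12;\delta_i)$ in the form used by the theorem statement. A preliminary observation is that $m/m_0$ is a perfect $n\tha$ power, so $\deg m\equiv\deg m_0\pmod n$; hence the congruence class $i$ that partitions $Z_1$ matches the congruence class of $\deg m_0$, and a single branch of \eqref{fe_incomplete} applies uniformly to every $m$ in the sum.

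The key algebraic step is to substitute $s\mapsto 1-s$ in the definition \eqref{Q_P_fe} of $Q$ to obtain
$$P(s;m)=Q(1-s;m)\,|m/\sqf{m}|^{1/2-s}.$$
The crucial cancellation is then
$$|\sqf{m}|^{1/2-s}\cdot|m/\sqf{m}|^{1/2-s}=|m|^{1/2-s},$$
so the $|\sqf{m}|^{1/2-s}$ factor produced by \eqref{fe_incomplete} combines with the $|m/\sqf{m}|^{1/2-s}$ from the $P$-to-$Q$ substitution to give $|m|^{1/2-s}$. Together with the $|m|^{-w}$ already present in $Z_1$, this yields exactly the weight $|m|^{-(w+s-1/2)}$ appearing in $Z_2(1-s,w+s-\tfrac12;\delta_i)$.

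For $i=0$ the first branch of \eqref{fe_incomplete} contributes the overall prefactor $q^{2s-1}(1-q^{-s})/(1-q^{-(1-s)})=q^{2s-1}(1-q^{-s})/(1-q^{s-1})$, together with the $m$-dependent factor $g(1,\epsilon,\chi_{m_0})/|\sqf{m}|^{1/2}$ and with $L(1-s,\bar{\chi}_{m_0})$ in place of $L(s,\chi_{m_0})$; these are precisely the ingredients of $Z_2(1-s,w+s-\tfrac12;\delta_0)$. For $0<i<n$ the second branch contributes the prefactor $q^{2s-1}q^{1/2-s}\bar{\tau}(\epsilon^i)/\sqrt{q}$ (with no extra $(1-q^{-s})/(1-q^{s-1})$ factor) along with the same $m$-dependent factors, giving $Z_2(1-s,w+s-\tfrac12;\delta_i)$ and hence the stated identity.

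There is no substantive obstacle here: the heavy lifting has already been done by Proposition \ref{prop:Lhatchi} (producing the $L$-function form of $Z_1$), by the classical functional equation \eqref{fe_incomplete}, and by the preceding proposition (whose proof via Theorem \ref{thm:h_i_fe} licenses the use of $L(s,\bar{\chi}_{m_0})Q(s;m)$ in $Z_2$). The only real pitfall in the present step is to partition the norm factor $|m|^{1/2-s}$ correctly between $|\sqf{m}|$ and $|m/\sqf{m}|$ and to apply the appropriate branch of \eqref{fe_incomplete} in each congruence class.
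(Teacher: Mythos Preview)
Your argument is correct and is exactly the ``direct computation utilizing the functional equation \eqref{fe_incomplete}'' that the paper indicates; you have simply spelled out the details---applying \eqref{fe_incomplete} termwise, inverting \eqref{Q_P_fe} to convert $P(s;m)$ into $Q(1-s;m)|m/\sqf{m}|^{1/2-s}$, and combining the norm factors---precisely as intended. The observation that $\deg m\equiv\deg m_0\pmod n$ so that a single branch of \eqref{fe_incomplete} applies uniformly within each $\delta_i$ is the right bookkeeping point, and your reliance on the preceding proposition ($Z_2'=Z_2$) to pass to the $L(s,\bar\chi_{m_0})Q(s;m)$ form of $Z_2$ is also in line with the paper's structure.
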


\begin{proof}
This is a direct computation utilizing the functional
equation  (\ref{fe_incomplete}) for $L(s,\chi_{m_0})$.
\end{proof}

\section{Convolutions}

We define a convolution operation on rational functions in $x$ and $y$
with  power series expansions around the origin.  For
\[
A(x,y)=\sum_{j,k\geq 0}a(j,k)x^jy^k
\text{\ \ and\ \ }
B(x,y)=\sum_{j,k\geq 0}b(j,k)x^jy^k\textrm{,}
\]
define
\[
(A\star B)(x,y)=\sum_{j,k\geq 0}a(j,k)b(j,k)x^jy^k\textrm{.}
\]
%Consider the integral 
%\begin{equation*}\begin{split}
%&\int_0^1\int_0^1 A(xe^{2\pi i s},ye^{2\pi i t})B(xe^{-2\pi i s},ye^{-2\pi i t})\textrm{d}s\textrm{d}t\\
%=&\int_0^1\int_0^1 \sum_{j_1,j_2,k_1,k_2}a(j_1,k_1)b(j_2,k_2)x^{j_1}e^{2\pi i j_1 s}y^{k_1}e^{2\pi i k_1 t}x^{j_2}e^{-2\pi i j_2 s}y^{k_2}e^{-2\pi i k_2 t}\textrm{d}s\textrm{d}t\\
%=&\int_0^1 \sum_{j,k_1,k_2}a(j,k_1)b(j,k_2)x^{2j}y^{k_1}e^{2\pi i k_1 t}y^{k_2}e^{-2\pi i k_2 t}\textrm{d}t\\
%=&\sum_{j,k}a(j,k)b(j,k)x^{2j}y^{2k}\textrm{d}t\\
%=&(A\star B)(x^2,y^2)\textrm{.}
%\end{split}\end{equation*}
We can compute convolutions as a double integral:
 %  We will make the substitutions $u=e^{2\pi i s}$ and $v=e^{2\pi i t}$ in 
% the next section to compute convolutions of this type.  With this substitution, the integral becomes 
\[
(A\star B)(x,y)=
\left(\frac{1}{2\pi i}\right)^2\int\int 
A(u,v)B(\frac xu,\frac yv)\frac{du dv}{uv}
\]
where each integral is a counterclockwise circuit of a small circle
in the complex plane. (The circle must be small enough that $A(x,y)$
is holomorphic for $x,y$ inside the circle.) We will utilize the residue 
theorem to compute this contour integral.

\section{Evaluation of $Z_1$ and $Z_2$}

\label{section_eval}

We will now prove Theorem \ref{thm:Z1Z2}.  We first establish the
identity (\ref{eqn:z1rf}); then (\ref{eqn:z2rf}) will follow from the
functional equation \ref{thm:fe}.
It follows from Proposition \ref{prop:Lhatchi} that
\begin{equation}
\label{coeff_0}
\sum_{\substack{d\in\OOmon\\\deg d=k}}\chi_{m_0}(\hat{d})a(d,m)=0
\end{equation}
when $\deg m\leq k$ unless $m$ is a perfect $n\tha$ power.
To prove (\ref{eqn:z1rf}) of Theorem \ref{thm:Z1Z2}, we begin by writing 
\begin{equation}
\label{Z_decomp}
Z_1(s,w)=Z_a(s,w)+Z_a(w,s)-Z_b(s,w)
\end{equation}
where 
\[
Z_a(s,w)=\sum_{k\geq j\geq 0}\frac{1}{q^{jw}q^{ks}}
\sum_{\substack{d,m\in\OOmon\\\deg m=j\\\deg d=k}}\chi_{m_0}(\hat{d})a(d,m)
\]
and
\[
Z_b(s,w)=\sum_{k\geq 0}\frac{1}{q^{kw}q^{ks}}
\sum_{\substack{m\in\OOmon\\\deg m=j}}\sum_{\substack{d\in\OOmon\\\deg d=k}}\chi_{m_0}(\hat{d})a(d,m).
\]
Firstly, note that 
\[
\sum_{\substack{m\in\OOmon\\\deg m=j}}
\sum_{\substack{d\in\OOmon\\\deg d=k}}
\chi_{m_0}(\hat{d})a(d,m)=
\sum_{\substack{m\in\OOmon\\\deg m=j}}
\sum_{\substack{d\in\OOmon\\\deg d=k}}
\chi_{d_0}(\hat{m})a(m,d)
\]
When $m$ and $d$ are coprime, 
the reciprocity law (\ref{eq:1}) guarantees that 
$\chi_{m_0}(\hat{d})=\chi_{d_0}(\hat{m})$.  Otherwise, when $m$ and $d$ are not 
coprime, $\chi_{m_0}(\hat{d})\neq\chi_{d_0}(\hat{m})$ only when there exists a prime 
$\p$ such that $\p|d_0$ and $\p|m_0$.  In this case, $a(d,m)=0$.  The
symmetry  $a(d,m)=a(m,d)$ is obvious.  This establishes the validity of the
decomposition (\ref{Z_decomp}) of $Z_1.$

Now, the key observation is that because of equation (\ref{coeff_0}),
we have 
\[
Z_a(s,w)=\sum_{k\geq j\geq 0}\frac{1}{q^{jw}q^{ks}}
\sum_{\substack{m\in\OOmon\\\deg m=j\\ m_0=1}}
\sum_{\substack{d\in\OOmon\\\deg d=k}}a(d,m)
\]
and
\[
Z_b(s,w)=\sum_{k\geq 0}\frac{1}{q^{kw}q^{ks}}
\sum_{\substack{m\in\OOmon\\\deg m=k\\ m_0=1}}
\sum_{\substack{d\in\OOmon\\\deg d=k}}a(d,m)\textrm{,}
\]
that is, the inner sum will vanish  unless $m$ is a perfect $n\tha$-power.
This leads us to consider the series 
\[
T_a(s,w)=
\sum_{\substack{m\in\OOmon\\ m_0=1}}
\sum_{\substack{d\in\OOmon}}
\frac{a(d,m)}{|m|^w|d|^s}
\]
which has Euler product
\begin{equation}\begin{split}
T_a(s,w)
&=\prod_{\p}\sum_{j,k\geq 0}\frac{a(\p^j,\p^{nk})}{|\p|^{nk}|\p|^{j}}\\
&=\prod_{\p}\frac{1-|\p|^{-s-nw}}{(1-|\p|^{-s})(1-|\p|^{-nw})(1-|\p|^{(n-1)-ns-nw})}\\
&=\frac{\zeta(s)\zeta(nw)\zeta(ns+nw-(n-1))}{\zeta(s+nw)}\\
&=\frac{1-q^{1-s-nw}}{(1-q^{1-s})(1-q^{1-nw})(1-q^{n-ns-nw})}\\
&=\frac{1-qxy^n}{(1-qx)(1-qy^n)(1-q^{n}x^ny^n)}\\
&=\tilde T_a(x,y),
\end{split}\end{equation}
with $x=q^{-s}, y=q^{-w}$.

It is clear that 
\[
Z_a(s,w)=(\tilde{T}_a\star \tilde{K}_a)(x,y)
\]
where 
\begin{align*}
\tilde{K_a}(x,y)&=\frac{1}{(1-x)(1-xy)}\\&=\sum_{j\geq k\geq 0} x^jy^k\mbox{.}
\end{align*}
We can compute this
integral by computing residues as described in the previous section.
We find
\begin{equation}
  \label{eq:8}
Z_a(s,w)=\frac{1}{(1-q^{n+1}x^{n}y^{n})(1-qx)}\textrm{.}  
\end{equation}

We can compute $Z_b$ similarly:  let $\tilde{K}_b(x,y)=\frac{1}{1-xy}$, then 
\begin{equation}\begin{split}
Z_b(s,w)
&=(\tilde{T}_a\star \tilde{K}_b)(x,y)\\
&=\frac{1}{1-q^{n+1}x^ny^n}\textrm{.}
\end{split}\end{equation}

Putting this all together,
\begin{equation}\begin{split}
Z_1(s,w)&=
\frac{1}{(1-q^{n+1}x^{n}y^{n})(1-qx)}+
\frac{1}{(1-q^{n+1}x^{n}y^{n})(1-qy)}\\&\quad\quad-\frac{1}{1-q^{n+1}x^{n}y^{n}}\\
&=\frac{1-qy+1-qx-(1-qy)(1-qx)}{(1-q^{n+1}x^{n}y^{n})(1-qx)(1-qy)}\\
&=\frac{1-q^2xy}{(1-q^{n+1}x^{n}y^{n})(1-qx)(1-qy)}\textrm{.}
\end{split}\end{equation}
This establishes (\ref{eqn:z1rf}).

With the rational function for $Z_1(s,w)$, we can use the functional equations relating 
$Z_1(s,w;\delta_i)$ and $Z_2(s,w;\delta_i)$ for $0\leq i<n$ to evaluate $Z_2(s,w)$.  
Expanding the geometric series $\frac{1}{1-qy}$ and collecting terms with the exponent on $y$ 
congruent to $i$ (mod $n$), we arrive at 
\[
Z_1(s,w;\delta_i)=\begin{cases}
\frac{1-q^{n+1}xy^n}{(1-qx)(1-q^ny^n)(1-q^{n+1}x^{n}y^{n})}&i=0\\
\frac{(q^i-q^{i+1}x)y^i}{(1-qx)(1-q^ny^n)(1-q^{n+1}x^{n}y^{n})}&0<i<n
\end{cases}
\]

Using the functional equations relating $Z_1(s,w,\delta_i)$ and $Z_2(1-s,w+s-\frac12,\delta_i)$ and remembering that 
$\left|\frac{\tau(\epsilon^i)}{\sqrt{q}}\right|=1$, we 
see that 
\[
Z_2(s,w;\delta_i)=\begin{cases}
q^{2s-1}\frac{1-q^{-s}}{1-q^{s-1}} Z_1(1-s,w+s-\frac{1}{2};\delta_i)& i=0\\
q^{2s-1}q^{1/2-s}\frac{\tau(\epsilon^i)}{\sqrt{q}} Z_1(1-s,w+s-\frac{1}{2};\delta_i)& 0<i<n
\end{cases}
\]
With this in hand, $Z_2$ is 
\[
q^{2s-1}\frac{1-q^{-s}}{1-q^{s-1}} Z_1(1-s,w+s-\frac{1}{2};\delta_0)+\sum_{i=1}^{n-1}q^{2s-1}q^{1/2-s}\frac{\tau(\epsilon^i)}{\sqrt{q}} Z_1(1-s,w+s-\frac{1}{2};\delta_i)
\]
which, when simplified, is the rational function for $Z_2$ given in Theorem \ref{thm:Z1Z2}.

\bibliographystyle{alpha}
\bibliography{sources}

\end{document}